\newtheorem{theoremcounter}{Theorem Counter}[section]
\theoremstyle{definition}
\newtheorem{definition}[theoremcounter]{Definition}
\theoremstyle{remark}
\newtheorem{remark}[theoremcounter]{Remark}
\theoremstyle{plain}
\newtheorem{lemma}[theoremcounter]{Lemma}
\newtheorem{proposition}[theoremcounter]{Proposition}
\newtheorem{theorem}[theoremcounter]{Theorem}
\numberwithin{equation}{section}
\newcommand{\A}{\mathcal{A}}
\newcommand{\Z}{\mathbb{Z}}
\newcommand{\Q}{\mathbb{Q}}
\newcommand{\R}{\mathbb{R}}
\newcommand{\bk}{\mathbf{k}}
\newcommand{\ZZ}{\mathcal{Z}}
\newcommand{\zA}{\zeta^{}_\A}
\newcommand{\gaW}{\gamma_\A^\mathrm{W}}
\newcommand{\gaM}{\gamma_\A^\mathrm{M}}
\def\gaK#1{\gamma_\A^{\mathrm{K},{#1}}}
\def\st#1#2{\genfrac{[}{]}{0pt}{}{#1}{#2}}
\def\sts#1#2{\genfrac{\{}{\}}{0pt}{}{#1}{#2}}
\def\la{\ell_\A}
\def\={\,=\,}
\title{On finite analogues of Euler's constant}
\date{}
\author{Masanobu Kaneko, Toshiki Matsusaka, and Shin-ichiro Seki}
\begin{document}
\maketitle

\begin{abstract}
We introduce and study finite analogues of Euler's constant in the same setting as finite multiple zeta values.
We define a couple of candidate values from the perspectives of a ``regularized value of $\zeta(1)$'' and of Mascheroni's and Kluyver's series expressions of Euler's constant using Gregory coefficients.
Moreover, we reveal that the differences between them always lie in the $\Q$-vector space spanned by 1 and values of a finite analogue of logarithm at positive integers.
\end{abstract}

\section{Introduction}
Euler's constant $\gamma$, introduced by Euler~\cite{Eul1} as the limit
\[ \gamma=\lim_{n\to\infty} \left(1+\frac12+\frac13+\cdots+\frac1n-\log n\right), \]
is one of the most renowned, mysterious mathematical constants. In particular, although there is
a vast amount of works in the literature on $\gamma$, it is still unknown if $\gamma$ is an irrational number or not. See Lagarias's survey paper~\cite{Lagarias2013} for more details
and in particular Euler's work on his constant $\gamma$. 

In this article, instead of investigating the real number $\gamma$ itself, we define and study several ``finite" analogues of Euler's constant in the setting 
which has been drawn much attention in recent years in connection to the {\it finite
multiple zeta value}. To motivate our study, we first briefly recall this rather new object and a notable conjecture concerning 
a connection to its real counterpart.

Let $\A$ be the quotient ring 
\[\A\coloneqq\left. \left(\prod_p \Z/p\Z\right) \middle/ \left(\bigoplus_p \Z/p\Z\right) \right.\]
of the direct product of $\Z/p\Z$ over all primes modulo the ideal of the direct sum.
The field $\Q$ of rational numbers can naturally be embedded diagonally in $\A$,
and via this the ring $\A$ is regarded as a $\Q$-algebra.

For each tuple of positive integers $(k_1,\ldots, k_r)$, we consider
a finite multiple zeta value $\zA(k_1,\ldots, k_r)$ in $\A$ given by\footnote{We often identify
an element in $\A$ with its representative in $\prod_p \Z/p\Z\simeq\prod_p \Z_{(p)}/p\Z_{(p)}$, where $\Z_{(p)}$ is the localization of $\Z$ at $(p)$.} 
\[ \zA(k_1,\ldots, k_r)\coloneqq\left(\sum_{0<m_1<\cdots <m_r<p}\frac1{m_1^{k_1}\cdots m_r^{k_r}}\ \bmod p\right)_p \in\A. \]
The $\Q$-vector space $\ZZ_\A$ spanned by all finite multiple zeta values becomes a $\Q$-algebra
by the so-called stuffle product structure. Then, our fundamental conjecture is the existence of an isomorphism of
$\Q$-algebras between $\ZZ_\A$ and $\ZZ_\R/\zeta(2)\ZZ_\R$, where $\ZZ_\R$ is the $\Q$-algebra of usual multiple 
zeta values in $\R$ and $\zeta(2)\ZZ_\R$ is the ideal of $\ZZ_\R$ generated by $\zeta(2)$ ($\zeta(s)$ is the Riemann zeta function):
\[    
\begin{array}{ccc}
 \ZZ_\A & \stackrel{?}{\simeq} & \ZZ_\R/\zeta(2)\ZZ_\R \\
\rotatebox{90}{$\in$} & & \rotatebox{90}{$\in$} \\
\zA(\bk) & \longleftrightarrow & \zeta^{}_\mathcal{S}(\bk) \end{array}
\]
with explicitly defined element $\zeta^{}_\mathcal{S}(\bk)$ in $\ZZ_\R/\zeta(2)\ZZ_\R$
which conjecturally corresponds to $\zA(\bk)$ under the predicted isomorphism. See~\cite{Kan19, KZ} for more details, and~\cite{Zhao}
for multiple zeta values in general.

Let $B_n$ be the Bernoulli number defined by the generating series
\[ \frac{x}{1-e^{-x}}=\sum_{n=0}^\infty B_n\frac{x^n}{n!} \]
(we adopt the definition with $B_1=\frac12$) and consider for each $k\ge2$ an element $Z_\A(k)$ in $\A$ given by 
\[ Z_\A(k)\coloneqq\left(\frac{B_{p-k}}k\,\bmod p\right)_p.\]
Note that by definition we may ignore finitely many $p$-components (in this case $p$'s with $p<k$) in order to 
give an element of $\A$. 

There are overwhelming evidences that the element in $\A$ which corresponds to
$\zeta(k)\bmod \zeta(2)\ZZ_\R$ under the conjectural isomorphism above should be $Z_\A(k)$. See, for instance, \cite{SW}, \cite{Murahara2016} for an $\A$-version of the so-called
sum formula, where the correspondence $Z_\A(k)\leftrightarrow\zeta(k)$ is visibly conspicuous, and \cite{KOS}, \cite{FujitaKomori2021} for similar evidence in the Aoki--Ohno relation.

A heuristic argument to ``justify'' this correspondence is 
\[ \zeta(k)\;\,\text{``}\!\underset{\text{Fermat}}{\equiv}\text{''}\;\zeta(k-(p-1))
\underset{\text{Euler}}{\=}-\frac{B_{p-k}}{p-k}
\equiv \frac{B_{p-k}}k\ \pmod p.\]
``Fermat'' means forcibly applying Fermat's little theorem to each summand of the Riemann zeta value. This can also be interpreted as a forbidden extrapolation of Kummer's congruence, which are valid for negative integers, to positive integers.

The aim of the present article is, from the perspectives of $\gamma$ as a regularized constant for $\zeta(1)$ and as series involving Gregory coefficients, to introduce possible analogues of $\gamma$ in $\A$ in essentially two ways, 
and discuss their interrelation and some related matters.

\section{Two analogues of $\gamma$, Wilson and Mascheroni}

We proceed with a similar heuristic as for $\zeta(k)$ and define an analogue $\gaW$ of Euler's constant in $\A$ as follows.
Let 
\[ W_p\coloneqq\frac{(p-1)!+1}p\in\Z \]
be the Wilson quotient. Throughout the article, $p$ always denotes a prime number.

\begin{definition}  We define $\gaW$ in $\A$ by
\[ \gaW\coloneqq\left(W_p\bmod p\right)_p\in\A. \]
\end{definition}
We do not know whether $\gaW$ is zero or not, but if there exist infinitely many non-Wilson primes (primes such that $W_p$ is not divisible by $p$), we can conclude that $\gaW$ is non-zero.

A heuristic explanation to argue that this is an analogue of Euler's constant is this\footnote{The first named author learned about ten years ago from Don Zagier that Maxim Kontsevich (in a private communication), who has considered the ring $\A$ in a different context~\cite{Kon}, considered this $\gaW$ should be an analogue of $\gamma$ in $\A$. His heuristic argument was, so it seemed, considerably different from the current one. Later, the third named author learned from Go Yamashita that one might regard the Wilson quotients as an analogue of Euler's constant in $\A$. His argument is different from both Kontsevich's and the one discussed in this paper.}. 
First, recall that $\gamma$ is a ``regularized value of $\zeta(1)$'', or, 
\[ \gamma=\lim_{s\to1} \left(\zeta(s)-\frac1{s-1}\right).\]
As before, we apply the reasoning 
\[ \zeta(1)\,\text{``}\equiv\text{"}\,\zeta(1-(p-1))=-\frac{B_{p-1}}{p-1}, \]
ending up in the value which is not $p$-integral. To remedy this, we add a ``polar term'' and have
\[ -\frac{B_{p-1}}{p-1}+\frac1p\in\Z_{(p)}, \]
by the theorem of Clausen and von Staudt~\cite[Theorem~3.1]{AIK}. If we define $\beta_n\ (n\ge1)$ by
\[ \beta_n\coloneqq\begin{cases}B_n/n & \text{if }p-1\nmid n, \\ (B_n+p^{-1}-1)/n & \text{if } p-1\mid n,\end{cases} \]
the value above is $-\beta_{p-1}$, and notably, it is known that various modulo $p$ congruences for $\beta_n$ hold uniformly, regardless of whether $n$ is divisible by $p-1$ or not (cf.~\cite{Johnson1975}). In particular, Kummer's congruence $\beta_{2n}\equiv \beta_{2n+p-1}$ ($p\geq 5$) holds for any $n$ and, by the (forbidden) extrapolation, the ``congruence''
\[ \text{``regularized value of }\zeta(1)\text{'' ``}\equiv\text{''} -\beta_{p-1}=-\frac{1}{p-1}\left(B_{p-1}+\frac1p-1\right) \]
might look natural.
Here we note that it has been known since more than a century that the last quantity in the parentheses is congruent to the Wilson quotient $W_p$ modulo $p$:
\begin{proposition}[Glaisher~\cite{Glaisher1900}]
	For any odd prime $p$,
	\[
		W_p \equiv B_{p-1} + \frac{1}{p} -1 \pmod{p}.
	\]
\end{proposition}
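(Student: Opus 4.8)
The plan is to work throughout modulo $p^{2}$ and to exploit the exact identity $(p-1)! = pW_{p}-1$ that defines the Wilson quotient. The idea is to compute the $(p-1)$-st power $\bigl((p-1)!\bigr)^{p-1}=\prod_{m=1}^{p-1}m^{p-1}$ in two independent ways: once by substituting the Wilson relation, and once by playing the product of the $(p-1)$-st powers against the power sum $S\coloneqq\sum_{m=1}^{p-1}m^{p-1}$, which Faulhaber's formula evaluates in terms of $B_{p-1}$. Comparing the two expressions will produce the claimed congruence after dividing by $p$.

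For the first evaluation, since $p$ is odd, $p-1$ is even, so from $(p-1)!=pW_{p}-1$ one finds
\[ \bigl((p-1)!\bigr)^{p-1}=(1-pW_{p})^{p-1}\equiv 1+pW_{p}\pmod{p^{2}}, \]
because every binomial term of index $\ge 2$ carries a factor $p^{2}$ and $-(p-1)\equiv 1\pmod p$.

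For the second evaluation I would use the device of comparing the product with the sum $S$. By Fermat's little theorem each $m^{p-1}\equiv 1\pmod p$, so write $m^{p-1}=1+p\,t_{m}$ with $p$-integral Fermat quotients $t_{m}$. Then simultaneously $S=(p-1)+p\sum_{m}t_{m}$ and $\prod_{m=1}^{p-1}m^{p-1}=\prod_{m}(1+p\,t_{m})\equiv 1+p\sum_{m}t_{m}\pmod{p^{2}}$, the cross terms being divisible by $p^{2}$. Eliminating $\sum_{m}t_{m}$ gives the clean relation
\[ \bigl((p-1)!\bigr)^{p-1}\equiv S-p+2\pmod{p^{2}}. \]
(Combining this with the previous paragraph already yields $W_{p}\equiv\sum_{m}t_{m}\pmod p$, essentially Lerch's classical formula.) It now remains to compute $S$ modulo $p^{2}$, for which I would invoke Faulhaber's formula $\sum_{m=1}^{N}m^{k}=\tfrac{1}{k+1}\sum_{j=0}^{k}\binom{k+1}{j}B_{j}N^{k+1-j}$, valid precisely with the convention $B_{1}=\tfrac12$ fixed in the paper, taking $k=p-1$ and $N=p$ and noting $\sum_{m=1}^{p}m^{p-1}\equiv S\pmod{p^{2}}$ since $p^{p-1}\equiv 0$.

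The step I expect to be the most delicate is showing that in this expansion all but the top term vanish modulo $p^{2}$. The $j$-th summand equals $\tfrac1p\binom{p}{j}B_{j}\,p^{p-j}$; for $0\le j\le p-2$ the prefactor $\tfrac1p$, the factor $p$ from $\binom pj$ (when $j\ge 1$) and the power $p^{p-j}$ combine to $p$-adic valuation at least $2$, once one knows that each $B_{j}$ with $j<p-1$ is $p$-integral — which is exactly the Clausen--von Staudt theorem, since $(p-1)\nmid j$ for $0<j<p-1$. Only the term $j=p-1$, equal to $pB_{p-1}$, survives; here $pB_{p-1}$ is $p$-integral and well defined modulo $p^{2}$ precisely because Clausen--von Staudt pins the denominator of $B_{p-1}$ to contain $p$ to the first power. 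This yields $S\equiv pB_{p-1}\pmod{p^{2}}$. Assembling the three congruences gives $1+pW_{p}\equiv pB_{p-1}-p+2\pmod{p^{2}}$, hence $pW_{p}\equiv p\bigl(B_{p-1}+\tfrac1p-1\bigr)\pmod{p^{2}}$, and dividing by $p$ produces the asserted identity. The small cases $p=3$ ($W_{3}=1$) and $p=5$ ($W_{5}=5$) can serve as sanity checks.
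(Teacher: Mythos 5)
Your proof is correct, and it takes a genuinely different route from the paper's. The paper disposes of Glaisher's congruence in one line, via the explicit formula \eqref{eq:BernStir} expressing $B_n$ through Stirling numbers of the second kind together with a Stirling-number congruence (the toolkit \eqref{eq:st1st2}, \eqref{eq:Stir2expl} that is set up anyway and reused in the proofs of \cref{thm:main1} and \cref{thm:gregory}, citing \cite{AIK}). You instead run a Lerch-style double counting modulo $p^2$: evaluating $\left((p-1)!\right)^{p-1}=\prod_{m=1}^{p-1}m^{p-1}$ once through the Wilson quotient as $1+pW_p$ (the parity of $p-1$ correctly used), and once against the power sum $S=\sum_{m=1}^{p-1}m^{p-1}$ as $S-p+2$, then computing $S\equiv pB_{p-1}\pmod{p^2}$ by Faulhaber's formula (in the $B_1=\tfrac12$ normalization matching the paper) plus Clausen--von Staudt. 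I checked the delicate points and they all hold: the elimination of $\sum_m t_m$ with cross terms in $p^2$; the vanishing mod $p^2$ of the Faulhaber terms with $0\le j\le p-2$, where the $p$-integrality of $B_j$ follows from Clausen--von Staudt for even $j$ (for odd $j\ge 3$ one has $B_j=0$, and $B_1=\tfrac12$ is $p$-integral for odd $p$, a case worth one explicit word); the survival of $j=p-1$ as $pB_{p-1}\in\Z_{(p)}$, well defined mod $p^2$ since $v_p(B_{p-1})=-1$; and the final division by $p$ of a congruence between elements of $p\Z_{(p)}$. Your parenthetical byproduct $W_p\equiv\sum_{m=1}^{p-1}q_p(m)\pmod{p}$ is indeed Lerch's 1905 formula, with the sign correct for the convention $W_p=\left((p-1)!+1\right)/p$ (I verified it at $p=3,7$). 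As for what each approach buys: yours is elementary and self-contained, requiring only Faulhaber and Clausen--von Staudt (which the paper already invokes), and it delivers Lerch's formula and the power-sum congruence $\sum_{m=1}^{p-1}m^{p-1}\equiv pB_{p-1}\pmod{p^2}$ along the way; the paper's sketch is shorter in context because the Stirling machinery \eqref{eq:BernStir}, \eqref{eq:st1st2} is needed later regardless, and it keeps the whole article within a single combinatorial framework.
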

One way to prove this is by using an explicit formula for Bernoulli numbers in terms of Stirling numbers of the second kind
(\cite[Theorem~2.8]{AIK}, see~\eqref{eq:BernStir} below) and a congruence for Stirling numbers.
  
In this way, we may regard $\gaW$ as an analogue of $\gamma$ in $\A$.
Further, we introduce yet another analogue of Euler's constant in $\A$ and establish a relation with $\gaW$ and other quantities in $\A$ such as an analogue $\log_\A(x)$ of logarithm.

Let $(G_n)_n$ be Gregory coefficients (also known as Bernoulli numbers of the second kind) defined by
\begin{equation}\label{eq:gregory} \frac{x}{\log(1+x)} = 1+\sum_{n=1}^\infty G_n x^n.\end{equation}
All $G_n$'s are rational numbers alternating in sign ($(-1)^{n-1}G_n>0$ for $n \geq 1$), first several of them being
\[ G_1 = \frac{1}{2}, \quad G_2 = -\frac{1}{12}, \quad G_3 = \frac{1}{24}, \quad G_4 = -\frac{19}{720}, \quad G_5 = \frac{3}{160}, \quad G_6 = -\frac{863}{60480}, \quad \ldots \ .\]

Mascheroni~\cite[pp.21--23]{Mascheroni1790} proved the following beautiful formula for Euler's constant:
\begin{equation}\label{eq:kluyver}  \gamma=\sum_{n=1}^\infty \frac{|G_n|}{n} = \frac12+\frac{1}{24}+\frac{1}{72}+\frac{19}{2880}+\frac{3}{800}+\frac{863}{362880}+\cdots. \end{equation}
This formula was later discovered independently several times, including by Kluyver~\cite{Klu} and N\"orlund~\cite{Norlund1924}. See \cite[p.406]{Blagouchine2016} for more details. Actually, Euler essentially proved this formula as well in \cite[\S7 and \S8]{Euler1789}, but unfortunately there is an erroneous shift by $1$ in each denominator, giving the value $1-\log 2$ instead of $\gamma$.

\begin{definition}  We define another possible analogue $\gaM$ of $\gamma$ in $\A$ by
\[ \gaM\coloneqq\left(\sum_{n=1}^{p-2}\frac{|G_n|}{n}\bmod p\right)_p \in \A. \]
\end{definition}
The reason why we truncate the sum at $p-2$ is that $G_{p-1}$ has $p$ in the denominator (and other $G_n\text{'s}\ (n<p-1)$ do not),
as can be seen from the formula 
\begin{equation}\label{eq:gregbyst1} G_n = \frac{(-1)^n}{n!} \sum_{m=1}^n \frac{(-1)^m \st{n}{m}}{m+1}, \end{equation}
where $\st{n}{m}$ is the Stirling number of the first kind, our convention (unsigned one) being the generating function
identity 
\begin{equation}\label{eq:st1}  \frac{\left(\log(1+x)\right)^m}{m!}=(-1)^m\sum_{n=m}^\infty(-1)^n\st{n}{m}\frac{x^n}{n!}  \end{equation}
holds for each $m\geq 0$. The formula~\eqref{eq:gregbyst1} for $G_n$ can be proved by writing $x$ in the numerator of the defining generating function~\eqref{eq:gregory} of $G_n$'s 
as $e^{\log(1+x)}-1$ and using~\eqref{eq:st1}. 

We mention in passing here the corresponding formula alluded before for Bernoulli numbers in terms of Stirling numbers of the second kind:
\begin{equation}\label{eq:BernStir}
B_n=  (-1)^n \sum_{m=0}^n \frac{(-1)^m m!\sts{n}{m}}{m+1}.
\end{equation}
Here, the Stirling number of the second kind $\sts{n}{m}$ is so defined that the generating series identity
\begin{equation}\label{eq:st2gen} \frac{(e^x-1)^m}{m!}=\sum_{n=m}^\infty \sts{n}{m}\frac{x^n}{n!} \end{equation}
holds for each $m\geq0$ (see for instance~\cite[\S2.1]{AIK}). We use later the following explicit formula (\cite[Proposition~2.6 (6)]{AIK}):
\begin{equation}\label{eq:Stir2expl}
\sts{n}{m}=\frac{(-1)^m}{m!}\sum_{l=0}^m(-1)^l\binom{m}{l}l^n.
\end{equation}

Our first result is a relation between $\gaM$ and $\gaW$. To state it, recall the Fermat quotient
\[ q_p(x) = \frac{x^{p-1} - 1}{p} \in\Z_{(p)}\] 
for any non-zero rational number $x$ whose denominator and numerator are both prime to $p$. We define $\log_\A\colon\Q^{\times}\to \A$ by
\[ \log_\A(x) \coloneqq\left(q_p(x)\bmod p\right)_p\in\A \]
and set 
\[ \ell_\mathcal{A}(x)\coloneqq x\log_\A(x)\in\A. \] 
As can be easily seen (and well known), $\log_\A$ satisfies the functional equation
\begin{equation}\label{eq:loglaw}
\log_\A(xy)=\log_\A(x)+\log_\A(y).
\end{equation}
Silverman~\cite{Silverman1988} proved that $\ker(\log_\A)=\{1,-1\}$ under the ABC-conjecture.

\begin{theorem}\label{thm:main1} We have
	\[
		\gaM = \gaW + \ell_\mathcal{A}(2) - 1.
	\]
\end{theorem}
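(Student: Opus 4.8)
The plan is to prove the identity componentwise: since $\ell_\mathcal{A}(2)=2\log_\A(2)=(2q_p(2)\bmod p)_p$, it suffices to establish, for all but finitely many primes $p$, the congruence
\[ \sum_{n=1}^{p-2}\frac{|G_n|}{n}\equiv W_p+2q_p(2)-1\pmod p. \]
I would attack the left-hand side through the integral representation $G_n=\int_0^1\binom{s}{n}\,ds$, which follows by integrating $(1+x)^s=\sum_{n\ge0}\binom{s}{n}x^n$ over $s\in[0,1]$ and comparing with \eqref{eq:gregory}. Since $(-1)^{n-1}G_n=|G_n|$, this yields the exact rational identity
\[ \sum_{n=1}^{p-2}\frac{|G_n|}{n}=\int_0^1 P(s)\,ds,\qquad P(s):=\sum_{n=1}^{p-2}\frac{(-1)^{n-1}}{n}\binom{s}{n}, \]
where $P$ is a polynomial of degree $p-2$ with $p$-integral coefficients.

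The technical heart is to identify $P$ modulo $p$ by a finite-difference computation. Using $\binom{s}{n}-\binom{s-1}{n}=\binom{s-1}{n-1}$, the identity $\frac1n\binom{s-1}{n-1}=\frac1s\binom{s}{n}$, and $\sum_{n=0}^{N}(-1)^n\binom{s}{n}=(-1)^N\binom{s-1}{N}$, one finds $P(s)-P(s-1)=\frac1s\bigl(\binom{s-1}{p-2}+1\bigr)$. Reducing modulo $p$ via $\prod_{a=1}^{p-1}(s-a)\equiv s^{p-1}-1$ and Wilson's theorem $(p-2)!\equiv1$ collapses this to the clean polynomial congruence $P(s)-P(s-1)\equiv\frac{s^{p-2}+1}{s+1}=\sum_{i=0}^{p-3}(-1)^i s^i\pmod p$. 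Since $P(0)=0$ and a polynomial of degree $<p$ is determined modulo $p$ by its backward difference and its value at $0$, this forces $P\equiv\sum_{i=0}^{p-3}(-1)^i\Phi_i\pmod p$, where $\Phi_i(s)=\sum_{a=1}^{s}a^i$ is the Faulhaber polynomial. Integrating term by term (legitimate modulo $p$ because $\deg P\le p-2$) and using $\int_0^1\Phi_i(s)\,ds=\frac{1-B_{i+1}(1)}{i+1}$ for the Bernoulli polynomials $B_n(x)$, together with $B_{i+1}(1)=(-1)^{i+1}B_{i+1}$, everything telescopes into
\[ \sum_{n=1}^{p-2}\frac{|G_n|}{n}\equiv\sum_{j=1}^{p-2}\frac{(-1)^{j-1}}{j}+\sum_{j=1}^{p-2}\frac{B_j}{j}\pmod p, \]
where on the right $B_1=-\tfrac12$; I would track this sign carefully, as the paper's convention is $B_1=+\tfrac12$.

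It then remains to evaluate the two sums. The alternating sum is elementary: splitting into even and odd denominators gives $\sum_{j=1}^{p-1}\frac{(-1)^{j-1}}{j}=H_{p-1}-H_{(p-1)/2}$, and with the classical congruences $H_{p-1}\equiv0$ and $H_{(p-1)/2}\equiv-2q_p(2)\pmod p$, plus the correction $\frac{(-1)^{p-2}}{p-1}\equiv1$ for the missing $j=p-1$ term, one obtains $\sum_{j=1}^{p-2}\frac{(-1)^{j-1}}{j}\equiv 2q_p(2)-1$. I expect the genuine obstacle to be the Bernoulli sum, where the target forces $\sum_{j=1}^{p-2}\frac{B_j}{j}\equiv W_p\pmod p$; by the Proposition of Glaisher this is equivalent to $\sum_{j=1}^{p-2}\frac{B_j}{j}\equiv B_{p-1}+\frac1p-1$. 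Conceptually this is exactly the uniform behaviour of the regularized quantities $\beta_n$ from the introduction, and I would prove it by the same mechanism that proves Glaisher's Proposition: substitute the explicit Stirling-number formula \eqref{eq:BernStir} (expanded via \eqref{eq:Stir2expl}) for $B_j$ into the sum and invoke the relevant Stirling congruences. This is precisely the step for which the paper sets up \eqref{eq:BernStir}–\eqref{eq:Stir2expl}, and it is the part I anticipate will require the most care. Combining the two evaluations gives $\sum_{n=1}^{p-2}\frac{|G_n|}{n}\equiv W_p+2q_p(2)-1\pmod p$, which is the asserted identity.
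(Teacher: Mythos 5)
Your reduction to the componentwise congruence $\sum_{n=1}^{p-2}|G_n|/n\equiv W_p+2q_p(2)-1\pmod p$ matches the paper, but from there your route is genuinely different, and the parts you actually carry out are correct: the representation $G_n=\int_0^1\binom{s}{n}\,ds$, the backward-difference identity $P(s)-P(s-1)=\frac1s\bigl(\binom{s-1}{p-2}+1\bigr)$, the mod-$p$ collapse via $\prod_{a=1}^{p-1}(s-a)\equiv s^{p-1}-1$ and $(p-2)!\equiv1$, the determination of $P$ from its difference (valid since $\deg P\le p-2<p$), the termwise integration, and the evaluation $\int_0^1\Phi_i=\frac{1-B_{i+1}(1)}{i+1}$ all check out, including the sign bookkeeping that lands you on $\sum_{j=1}^{p-2}\frac{(-1)^{j-1}}{j}+\sum_{j=1}^{p-2}\frac{B_j}{j}$ with $B_1=-\frac12$. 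Your evaluation of the alternating sum as $2q_p(2)-1$ is also correct; note that $H_{(p-1)/2}\equiv-2q_p(2)$ is just a repackaging of the Eisenstein congruence the paper invokes. I also confirmed your reduced target $\sum_{j=1}^{p-2}B_j/j\equiv W_p\pmod p$ numerically at $p=5,7,11$, so your decomposition is arithmetically sound. By contrast, the paper gets to the goal much faster: the recurrence $\sum_{n=1}^{k-1}\frac{|G_n|}{k-n}=\frac1k$ at $k=p$ converts the Mascheroni sum into $-G_{p-1}-\frac1p$ (their \eqref{Gregory-p-1}), and then \eqref{eq:gregbyst1} at $n=p-1$ together with $\st{p-1}{m}\equiv\sts{p-m}{1}=1$ lets Wilson's theorem produce $W_p$ directly from the $1/(p-1)!$ prefactor.

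The genuine gap is your final step: the congruence $\sum_{j=1}^{p-2}\frac{B_j}{j}\equiv W_p\pmod p$ is exactly where the Wilson quotient enters, i.e.\ where the entire arithmetic content of the theorem is concentrated, and you assert it with only a plan. Moreover the plan as stated underestimates what is needed: Glaisher's Proposition alone does not reduce it, since $\sum_{j=1}^{p-2}B_j/j\equiv B_{p-1}+\frac1p-1$ is an additional nontrivial congruence not contained in that statement, and if you do substitute \eqref{eq:BernStir} expanded via \eqref{eq:Stir2expl}, you are led to $\sum_{m}\frac{(-1)^m}{m+1}\sum_{l}(-1)^l\binom{m}{l}\sigma(l)$ with $\sigma(l)=\sum_{j=1}^{p-2}l^j/j$; using $\sum_{j=1}^{p-1}\frac{x^j}{j}\equiv\frac{1-x^p-(1-x)^p}{p}\pmod p$ this becomes a double sum of Fermat quotients, from which $W_p$ only emerges via a further input such as Lerch's congruence $\sum_{a=1}^{p-1}q_p(a)\equiv-W_p\pmod p$ (or an equivalent Glaisher-type mod-$p^2$ power-sum congruence). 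So the missing congruence is true and classical, but proving it is a task comparable in size to the paper's entire proof, and as written your argument does not establish it; either supply that derivation (Lerch plus the binomial identity above will close it) or cite the classical result explicitly, in the same way the paper cites Eisenstein.
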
 

\begin{proof}  What we need to show is the congruence 
	\begin{equation}\label{gregwils}
		\sum_{n=1}^{p-2} \frac{|G_n|}{n} \equiv W_p + 2 q_p(2) - 1 \pmod{p}
	\end{equation}
for sufficiently large $p$. Actually, \eqref{gregwils} holds for all $p$ except $p=2$. 
From the defining equation~\eqref{eq:gregory}, we can derive a recurrence for $G_n$, 
\[
	 \sum_{n=1}^{k-1} \frac{|G_n|}{k-n}  = \frac{1}{k}
\]
for $k\geq 2$, and in particular for $k=p>2$, we obtain
\begin{align}\label{Gregory-p-1}
	-G_{p-1} - \frac{1}{p} = -\sum_{n=1}^{p-2} \frac{|G_n|}{p-n} \equiv \sum_{n=1}^{p-2} \frac{|G_n|}{n} \pmod{p}.
\end{align}
On the other hand, the formula~\eqref{eq:gregbyst1} for $n=p-1$ gives
\[
	G_{p-1} = \frac{1}{(p-1)!} \sum_{m=1}^{p-2} \frac{(-1)^m \st{p-1}{m}}{m+1} + \frac{1}{p!}.
\]
Together, noting the congruence $\st{p-1}{m}\equiv1\pmod{p}$ which comes from (see for instance~\cite[\S5]{Hofmodp})
\begin{equation}\label{eq:st1st2}
	\st{n}{m} \equiv \sts{p-m}{p-n} \pmod{p}
\end{equation}
for $1 \leq m \leq n \leq p-1$ and $\sts{n}{1} = 1$ as well as Wilson's theorem, we obtain
\begin{align*}
	\sum_{n=1}^{p-2} \frac{|G_n|}{n} &\equiv -G_{p-1} - \frac{1}{p} =-G_{p-1} +\frac{1}{p!}-\frac{1+(p-1)!}{p(p-1)!}\\
	&\equiv \sum_{m=1}^{p-2} \frac{(-1)^m}{m+1} +W_p\\
	&\equiv  W_p +2q_p(2) - 1\pmod{p}.
\end{align*}
Here, we have used a well-known congruence (which goes back to Eisenstein~\cite{Eisenstein1850})
\[ \sum_{m=0}^{p-2} \frac{(-1)^m}{m+1}\equiv2q_p(2)  \pmod{p}.\qedhere \]
\end{proof}

\section{An interlude, Gregory}

Let us mimic the definition of $Z_\A(k)$ in the introduction for the Gregory coefficient $G_n$.

\begin{definition} For $k\ge2$, define $G_\A(k)\in\A$ by
\[ G_\A(k)\coloneqq\left(G_{p-k}\bmod p\right)_p. \]
\end{definition}

\begin{theorem}\label{thm:gregory}  For $k\ge2$, we have
\[ G_\A(k)= (-1)^k\sum_{j=1}^k(-1)^{j-1}\binom{k}{j}\la(j+1). \]
\end{theorem}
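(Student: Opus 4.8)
The plan is to start from the Stirling-number expression~\eqref{eq:gregbyst1} for $G_{p-k}$ and reduce everything modulo~$p$ (for all sufficiently large $p$, which suffices for an equality in $\A$). Since $k\ge 2$ we have $p-k\le p-2$, so every denominator $m+1$ with $1\le m\le p-k$ stays below $p$ and $G_{p-k}$ is genuinely $p$-integral; no ``polar'' correction as in the proof of Theorem~\ref{thm:main1} is needed. First I would treat the prefactor: by Wilson's theorem $(p-k)!\equiv (-1)^k/(k-1)!\pmod p$, whence $(-1)^{p-k}/(p-k)!\equiv -(k-1)!\pmod p$. Next I would convert the Stirling numbers of the first kind in~\eqref{eq:gregbyst1} into Stirling numbers of the second kind via~\eqref{eq:st1st2}, using $\st{p-k}{m}\equiv\sts{p-m}{k}\pmod p$, and substitute $l=p-m$. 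Reducing $(-1)^{p-m}$ and $m+1\equiv 1-l$ modulo $p$, the sum over $m$ becomes
\[ G_{p-k}\equiv -(k-1)!\sum_{l=k}^{p-1}\frac{(-1)^l\sts{l}{k}}{l-1}\pmod p. \]

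The heart of the argument is to expand $\sts{l}{k}$ by the explicit formula~\eqref{eq:Stir2expl}, interchange the order of summation, and evaluate the inner sum over $l$. After the shift $s=l-1$ this inner sum is a \emph{truncated} power sum $\sum_{s=k-1}^{p-2}(-i)^s/s$, and the key auxiliary identity I would establish is
\[ \sum_{s=1}^{p-1}\frac{a^s}{s}\equiv -\la(a)-\la(1-a)\pmod p \]
for integers $a$ with $a,\,1-a\not\equiv 0$. This follows by expanding $(1-a)^p$ and using $\tfrac1p\binom{p}{j}\equiv (-1)^{j-1}/j\pmod p$ for $1\le j\le p-1$, together with $\la(y)\equiv (y^p-y)/p\pmod p$. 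Specializing $a=-i$ and noting $\log_\A(-1)=q_p(-1)=0$ (so $\la(-i)=-\la(i)$) gives $\sum_{s=1}^{p-1}(-i)^s/s\equiv \la(i)-\la(i+1)$.

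The hard part is that the inner sum runs only over $k-1\le s\le p-2$, not over the full range $1\le s\le p-1$, so a priori it is not this clean quantity. The crux, which I expect to be the main obstacle, is to show that \emph{all} the correction terms vanish after summation over $i$. Completing the lower end of the sum introduces powers $(-i)^{s+1}$ with $s+1\le k-1$, while completing the upper end introduces the $s=p-1$ term (which is $\equiv -1$); in each case the coefficient, after summing over $i$, is a finite difference $\sum_{i=0}^{k}(-1)^i\binom{k}{i}i^{n}$ with $0\le n<k$, which is $(-1)^k k!\,\sts{n}{k}=0$ since $\sts{n}{k}=0$ for $n<k$. Hence the truncated inner sum may be replaced by $\la(i)-\la(i+1)$ without error, and I obtain
\[ G_{p-k}\equiv \frac{(-1)^{k+1}}{k}\sum_{i=1}^{k}(-1)^i(-i)\binom{k}{i}\bigl(\la(i)-\la(i+1)\bigr)\pmod p. \]

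Finally I would collect the coefficient of each $\la(t)$. Writing $i\binom{k}{i}=k\binom{k-1}{i-1}$ and combining the contribution of $\la(i)$ (from the first term) with that of $\la(i+1)$ (from the second) through Pascal's rule $\binom{k-1}{t-1}+\binom{k-1}{t-2}=\binom{k}{t-1}$, the coefficient of $\la(t)$ becomes $(-1)^{k+t}\binom{k}{t-1}$ for $1\le t\le k+1$. Since $\la(1)=\log_\A(1)=0$, the $t=1$ term drops out, and reindexing $t=j+1$ yields exactly $G_\A(k)=(-1)^k\sum_{j=1}^{k}(-1)^{j-1}\binom{k}{j}\la(j+1)$. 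A small numerical check (for $k=2$ the claim reads $G_\A(2)=2\la(2)-\la(3)$, which one verifies at $p=5,7$) guards against sign slips.
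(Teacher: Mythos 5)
Your argument is correct, and it takes a genuinely different route through the heart of the computation. You and the paper share the opening moves: both start from \eqref{eq:gregbyst1}, convert Stirling numbers of the first kind to the second kind via \eqref{eq:st1st2}, flip the index $m\mapsto p-m$, and handle the prefactor with Wilson's theorem. But the paper then works from the right-hand side of the theorem: using \eqref{eq:Stir2expl} it establishes the exact identity
\[ (-1)^k\sum_{j=1}^k(-1)^{j-1}\binom{k}{j}\frac{(j+1)^p-(j+1)}{p}=-k!\cdot\frac1p\sts{p+1}{k+1}, \]
and then pushes the $G_{p-k}$ sum to the same target using $\frac1p\binom{p}{n}\equiv\frac{(-1)^{n-1}}{n}\pmod{p}$ and Jordan's recurrence $\sum_{m}\binom{n}{m}\sts{m}{k}=\sts{n+1}{k+1}$, so the two sides meet at the single Stirling number $\sts{p+1}{k+1}$ and the only remaining mod-$p$ input is $\sts{p}{k}\equiv0$. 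You instead stay on the $G_{p-k}$ side throughout: you expand $\sts{l}{k}$ by \eqref{eq:Stir2expl} in the opposite direction, evaluate the resulting truncated power sums with your key lemma $\sum_{s=1}^{p-1}a^s/s\equiv-\la(a)-\la(1-a)\pmod{p}$ (which is correct: the sum equals $\frac{1-a^p-(1-a)^p}{p}$ mod $p$, and $\la(-i)=-\la(i)$ since $q_p(-1)=0$), and kill both boundary corrections by the vanishing of the finite differences $\sum_{i=0}^k(-1)^i\binom{k}{i}i^n=(-1)^kk!\sts{n}{k}=0$ for $n<k$ --- correctly identified as the crux, and also exactly where $k\geq2$ enters, since the $s=p-1$ correction needs $\sts{1}{k}=0$. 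I checked the details: the prefactor $-(k-1)!$, the intermediate congruence $G_{p-k}\equiv-(k-1)!\sum_{l=k}^{p-1}(-1)^l\sts{l}{k}/(l-1)$, and the final Pascal-rule collection of the coefficient of $\la(t)$ all hold, as do your numerical checks at $p=5,7$ for $k=2$. What the paper's route buys is bookkeeping economy: exact Stirling identities with no completion-of-sums analysis. What yours buys is conceptual transparency: each $\la(i)-\la(i+1)$ emerges directly from a complete power sum via Fermat quotients, which even offers a partial answer to the question raised in the paper's remark after the theorem about why $\la$ rather than $\log_\A$ appears in these formulas.
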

	
\begin{proof} 
Assume that $p\geq k+1$. Starting with the right-hand side, we compute
\begin{align*}
&(-1)^k\sum_{j=1}^k (-1)^{j-1}\binom{k}{j}\frac{(j+1)^p-(j+1)}p\\
&=\frac{(-1)^k}{(k+1)p}\sum_{j=1}^k(-1)^{j-1}\binom{k+1}{j+1}(j+1)^{p+1}-\frac{(-1)^k}{p}\\
&=\frac{(-1)^k}{(k+1)p}\sum_{j=1}^{k+1}(-1)^j\binom{k+1}{j}j^{p+1}\\
&=-k!\cdot\frac1p\sts{p+1}{k+1}.
\end{align*}
Here, we have used $ \sum_{j=1}^k(-1)^{j-1}\binom{k}{j}(j+1)=1\ (k\ge2)$ and~\eqref{eq:Stir2expl}.
Our goal is then to prove the congruence for $p\geq k+1$
\begin{equation}\label{cong:gregory}
G_{p-k}\equiv-k!\cdot\frac{1}{p}\sts{p+1}{k+1}\pmod{p}.
\end{equation}
From the explicit formula~\eqref{eq:gregbyst1} and the congruence~\eqref{eq:st1st2}, we have
\begin{align*}
G_{p-k}&=\frac{(-1)^{k-1}}{(p-k)!}\sum_{m=1}^{p-k}\frac{(-1)^m\st{p-k}{m}}{m+1}\\
&\equiv\frac{(-1)^{k-1}}{(p-k)!}\sum_{m=1}^{p-k}\frac{(-1)^m\sts{p-m}{k}}{m+1}\pmod{p}\\
&=\frac{(-1)^{k-1}}{(p-k)!}\sum_{m=k}^{p-1}\frac{(-1)^{p-m}\sts{m}{k}}{p-m+1}.
\end{align*}
By noting that the congruence 
\[ \frac1p\binom{p}{n}=\frac{(p-1)\cdots(p-n+1)}{n!}\equiv \frac{(-1)^{n-1}}{n}\pmod{p} \]
holds for $1\leq n<p$, we have
\[ G_{p-k}\equiv \frac{(-1)^{k-1}}{(p-k)!}\sum_{m=k}^{p-1}\frac1p\binom{p}{p-m+1}\sts{m}{k}=
\frac{(-1)^{k-1}}{(p-k)!\,p} \sum_{m=1}^{p-1}\binom{p}{m-1}\sts{m}{k}\pmod{p},\]
where $\sts{m}{k}=0$ if $m<k$.
In the following computation, we use a formula for $n\geq k$
\[ \sum_{m=1}^n \binom{n}{m}\sts{m}{k}=\sts{n+1}{k+1}, \]
which is well known (for instance Jordan~\cite[(80)]{Jordan1933}), and can be derived from~\eqref{eq:st2gen} (with $m=k+1$) by differentiating once and use~\eqref{eq:st2gen}
again on the left to compare the coefficients of $x^n$.
Then, we compute by using basic binomial and Stirling identities
\begin{align*} 
\sum_{m=1}^{p-1}\binom{p}{m-1}\sts{m}{k}&=\sum_{m=1}^{p-1}\left(\binom{p+1}{m}-\binom{p}{m}\right)\sts{m}{k}\\
&=\sum_{m=1}^{p-1}\binom{p+1}{m}\sts{m}{k}-\sum_{m=1}^{p-1}\binom{p}{m}\sts{m}{k}\\
&=\sts{p+2}{k+1}-(p+1)\sts{p}{k}-\sts{p+1}{k}-\sts{p+1}{k+1}+\sts{p}{k}\\
&=k\sts{p+1}{k+1}-p\sts{p}{k}.
\end{align*}
Now, since $\sts{p}{k}\equiv0\pmod{p}$ by $p \geq k+1$ and $\frac1{(p-k)!}\equiv(-1)^k(k-1)!\pmod{p}$, we have proved \eqref{cong:gregory} as desired.
\end{proof}

\begin{remark}  There is a nice formula for $\gamma$ as an infinite series involving $\log$'s of natural numbers (Euler~\cite[\S6]{Euler1789} and Ser~\cite{Ser1926}):
\[ \gamma=\sum_{n=1}^\infty \frac1{n+1}\sum_{j=1}^n(-1)^{j-1}\binom{n}{j}\log(j+1). \]
In view of this and Mascheroni's formula~\eqref{eq:kluyver}, our \cref{thm:gregory} just proved
is quite intriguing. And at the same time a natural question emerges:
What is the reason of the appearance of $\la$ instead of $\log_\A$ in our analogous formulas?
\end{remark}

About the non-vanishing of the value $G_\A(k)$, the following can be said: 
\begin{theorem}
For $k\geq 2$, $ G_\A(k)\neq0$ under the ABC-conjecture.
\end{theorem}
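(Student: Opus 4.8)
The plan is to combine the explicit formula for $G_\A(k)$ from \cref{thm:gregory} with Silverman's theorem on $\ker(\log_\A)$. First I would use $\la(j+1)=(j+1)\log_\A(j+1)$ to rewrite
\[ G_\A(k)=(-1)^k\sum_{j=1}^k(-1)^{j-1}\binom{k}{j}(j+1)\log_\A(j+1), \]
and then expand each $\log_\A(j+1)$ over the primes via the functional equation~\eqref{eq:loglaw}: writing $v_q$ for the $q$-adic valuation, we have $\log_\A(j+1)=\sum_q v_q(j+1)\log_\A(q)$. This presents $G_\A(k)$ as a $\Q$-linear combination of the elements $\log_\A(q)$, $q$ prime.

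The key input is that, under the ABC-conjecture, the family $\{\log_\A(q)\}_{q\ \mathrm{prime}}$ is $\Q$-linearly independent in $\A$. This is where Silverman's result $\ker(\log_\A)=\{1,-1\}$ enters: restricted to the positive rationals, $\log_\A$ is injective (since $-1\notin\Q_{>0}^\times$), and because $\Q_{>0}^\times$ is free abelian on the primes, any nontrivial $\Q$-linear relation $\sum_q c_q\log_\A(q)=0$ could be cleared of denominators to a $\Z$-linear relation $\sum_q a_q\log_\A(q)=0$, forcing $\prod_q q^{a_q}=1$ with not all $a_q$ zero, which contradicts unique factorization. Hence it suffices to exhibit a single prime $q$ whose coefficient in the expansion above is nonzero.

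For this I would invoke Bertrand's postulate to choose a prime $q$ with $(k+1)/2<q\le k+1$. Then $2q>k+1$, so among $2,3,\ldots,k+1$ the only multiple of $q$ is $q$ itself, giving $v_q(j+1)=1$ precisely when $j+1=q$ and $v_q(j+1)=0$ otherwise. Consequently the coefficient of $\log_\A(q)$ in $G_\A(k)$ comes solely from the term $j=q-1$ and equals
\[ (-1)^k(-1)^{q-2}\binom{k}{q-1}\,q=(-1)^{k+q}\binom{k}{q-1}\,q. \]
Since $2\le q\le k+1$, we have $1\le q-1\le k$, so $\binom{k}{q-1}\neq0$ and this coefficient is nonzero. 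Linear independence of the $\log_\A(q)$ then yields $G_\A(k)\neq0$.

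The main obstacle is the passage from Silverman's kernel computation to genuine $\Q$-linear independence of the $\log_\A(q)$; once that is in place, the combinatorial selection of a suitable prime via Bertrand's postulate is routine, and the coefficient extraction is immediate from the valuation bookkeeping.
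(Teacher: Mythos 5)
Your proposal is correct and follows essentially the paper's own route: the paper combines \cref{thm:gregory} with the functional equation~\eqref{eq:loglaw} to write $G_\A(k)=(-1)^k\log_\A\bigl(\prod_{j=1}^k(j+1)^{(-1)^{j-1}(j+1)\binom{k}{j}}\bigr)$ and concludes from Silverman's theorem that $G_\A(k)\neq0$ since the product is a positive rational number not equal to $1$, which is the same use of $\ker(\log_\A)=\{1,-1\}$ plus unique factorization that underlies your linear-independence formulation. Your Bertrand's-postulate step --- picking a prime $q$ with $(k+1)/2<q\le k+1$ so that the coefficient $(-1)^{k+q}q\binom{k}{q-1}$ of $\log_\A(q)$ is visibly nonzero --- is precisely the verification the paper leaves implicit when asserting the product is not $1$, so your write-up is the same argument, just made more explicit.
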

\begin{proof}
By \cref{thm:gregory} and \eqref{eq:loglaw}, we have
\[
G_\A(k)=(-1)^k\log_\A\left(\prod_{j=1}^k(j+1)^{(-1)^{j-1}(j+1)\binom{k}{j}}\right).
\]
Since the product inside the parentheses is a positive rational number not equal to $1$, $G_\A(k)$ does not vanish by Silverman's theorem in \cite{Silverman1988} under the ABC-conjecture.
\end{proof}

\section{Yet more variations, Kluyver}

Kluyver~\cite{Klu} also proved an infinite family of formulas for $\gamma$ (or rather, the difference of $\gamma$ and the 
usual quantity whose limit is $\gamma$) in terms of $G_n$'s:
\begin{equation}\label{eq:kluyverm} \gamma=m!\sum_{n=1}^\infty \frac{|G_n|}{n(n+1)\cdots (n+m)}+H_m-\log(m+1),
\end{equation}
where $m$ is a positive integer and $H_m=\sum_{j=1}^m\frac1j$, the $m$th harmonic number.
If we understand $H_0=0$, this includes the formula~\eqref{eq:kluyver} as the case $m=0$.
Binet~\cite[p.136]{Binet} and N\"orlund~\cite[p.244]{Norlund1924} also proved formulas for the digamma function that leads to \eqref{eq:kluyverm}.

Now, let us define for each $m\ge1$ an element $\gaK{m}$ in $\A$ as an analogue of~\eqref{eq:kluyverm} by
\[ \gaK{m}\coloneqq\left(m!\sum_{n=1}^{p-m-1}\frac{|G_n|}{n(n+1)\cdots (n+m)} \bmod p\right)_p+H_m-\la(m+1). \]

\begin{remark} Note the discrepancy in that the upper limit in the case of $m=0$ is different from the one in the definition of $\gaM$.
We are curious to uncover the reason (if any) why.
\end{remark}

\begin{theorem}\label{thm:kluyvergeneral} We have for each $m\ge1$
\[ \gaK{m} =\gaW-1+(H_m-1)\la(m+1)+\sum_{j=1}^{m-1}(-1)^{m-j}\binom{m}{j}\frac{\la(j+1)}{m-j}. \]
\end{theorem}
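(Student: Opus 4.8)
The plan is to reduce \cref{thm:kluyvergeneral} to the computation, as an element of $\A$, of the class of the truncated sum
\[ S_p\coloneqq m!\sum_{n=1}^{p-m-1}\frac{|G_n|}{n(n+1)\cdots(n+m)}, \]
since by definition $\gaK{m}=(S_p\bmod p)_p+H_m-\la(m+1)$; concretely one must show $(S_p\bmod p)_p=\gaW-1-H_m+H_m\la(m+1)+\sum_{j=1}^{m-1}(-1)^{m-j}\binom{m}{j}\frac{\la(j+1)}{m-j}$ for all large $p$. First I would split the summand by the partial fraction expansion
\[ \frac{m!}{n(n+1)\cdots(n+m)}=\sum_{j=0}^{m}\frac{(-1)^j\binom{m}{j}}{n+j}, \]
so that $S_p=\sum_{j=0}^m(-1)^j\binom{m}{j}T_j$ with $T_j=\sum_{n=1}^{p-m-1}\frac{|G_n|}{n+j}$. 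Since $n+j\equiv-(p-n-j)\pmod p$, each $T_j$ is congruent to $-\sum_{n=1}^{p-m-1}\frac{|G_n|}{p-j-n}$, a partial sum of the Gregory recurrence $\sum_{n=1}^{k-1}\frac{|G_n|}{k-n}=\frac1k$ (obtained from \eqref{eq:gregory} as in the proof of \cref{thm:main1}) taken at $k=p-j$.

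Second, I would evaluate each $T_j$. The recurrence at $k=p-j$ gives $\sum_{n=1}^{p-j-1}\frac{|G_n|}{p-j-n}=\frac{1}{p-j}$, and since $T_j$ only runs to $n=p-m-1$ the difference is the short tail $\sum_{i=j+1}^{m}\frac{|G_{p-i}|}{i-j}$ (put $n=p-i$), empty for $j=m$. Thus for $1\le j\le m$ one gets $T_j\equiv\frac1j+\sum_{i=j+1}^{m}\frac{|G_{p-i}|}{i-j}\pmod p$. The term $j=0$ is special: there $k=p$ produces the non-$p$-integral pair $|G_{p-1}|=-G_{p-1}$ and $\frac1p$, and the congruence $-G_{p-1}-\frac1p\equiv W_p+2q_p(2)-1$ already proved inside \cref{thm:main1} (via \eqref{Gregory-p-1} and the Eisenstein congruence recalled there) turns this boundary into the Wilson quotient. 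Every remaining tail entry involves $|G_{p-i}|=(-1)^iG_{p-i}$ for $2\le i\le m$, i.e.\ the $p$-components of the already-known $G_\A(i)$.

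Third comes the bookkeeping. Collecting the contributions not involving the $G_\A(i)$'s and using the classical identity $\sum_{j=1}^m(-1)^{j-1}\binom{m}{j}\frac1j=H_m$, they assemble, via $\gaW=(W_p\bmod p)_p$ and $\la(2)=(2q_p(2)\bmod p)_p$, to $\gaW+\la(2)-1-H_m$. Writing $c_i\coloneqq\sum_{j=0}^{i-1}(-1)^j\binom{m}{j}\frac{1}{i-j}$ for the coefficient with which $(-1)^iG_\A(i)$ enters and substituting \cref{thm:gregory} in the form $(-1)^iG_\A(i)=\sum_{\ell=1}^{i}(-1)^{\ell-1}\binom{i}{\ell}\la(\ell+1)$, the $\la$-part equals $\sum_{i=2}^{m}c_i\sum_{\ell=1}^{i}(-1)^{\ell-1}\binom{i}{\ell}\la(\ell+1)$. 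The stray $\la(2)$ from $2q_p(2)$ is exactly the $i=1$ term one would prepend to this double sum (the $i=1$ instance of the formula in \cref{thm:gregory} reading $\sum_{\ell=1}^1(-1)^{\ell-1}\binom1\ell\la(\ell+1)=\la(2)$), so the two cancel and $\sum_{i=2}^m$ may be replaced by $\sum_{i=1}^m$. Comparing coefficients of each $\la(\ell+1)$, the theorem reduces to the purely combinatorial claim
\[ (-1)^{\ell-1}\sum_{i=\ell}^{m}c_i\binom{i}{\ell}=\begin{cases}(-1)^{m-\ell}\binom{m}{\ell}\dfrac{1}{m-\ell}, & 1\le\ell\le m-1,\\[2mm] H_m, & \ell=m.\end{cases} \]

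I expect this combinatorial identity to be the main obstacle. The natural handle is the generating function $\sum_{i\ge1}c_ix^i=-(1-x)^m\log(1-x)$, obtained by Cauchy-multiplying $(1-x)^m$ with $-\log(1-x)=\sum_{r\ge1}x^r/r$. The subtlety is that only the \emph{truncated} range $i\le m$ occurs (the tails reach $G_{p-i}$ only for $i\le m$), so one cannot merely read $\sum_i c_i\binom{i}{\ell}=\frac{1}{\ell!}F^{(\ell)}(1)$ off this generating function, $F(x)=-(1-x)^m\log(1-x)$ being singular at $x=1$. I would instead induct on $m$ using the relation $F_m'(x)=-mF_{m-1}(x)+(1-x)^{m-1}$, which turns into a recurrence among the $c_i$ and propagates the two-case right-hand side, the value $H_m$ at $\ell=m$ arising as the $\ell\to m$ regularization of $\binom{m}{\ell}/(m-\ell)$. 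The cases $m=1,2$ already match \cref{thm:kluyvergeneral} directly and serve as a base and sanity check; for $m=1$ the $\la$-part is empty and one recovers $\gaK{1}=\gaW-1$.
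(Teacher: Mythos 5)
Your proposal is correct, and its route to the key congruence is genuinely different from the paper's. The paper first proves an exact identity over $\R$ (their \eqref{eq:lem1}), obtained by differentiating the binomial series $\bigl(1-(1-x)^s\bigr)/x$ with respect to $s$ at $s=m$ via the digamma limit \eqref{eq:binderiv} and multiplying by $x/(-\log(1-x))$; only then does it set $l=p-1$ and reduce modulo $p$ using three auxiliary congruences (\eqref{eq:lem2}, \eqref{eq:lem3}, \eqref{eq:lem4}). You never leave characteristic $p$: the partial-fraction split $m!/(n(n+1)\cdots(n+m))=\sum_{j=0}^m(-1)^j\binom{m}{j}/(n+j)$ together with the Gregory recurrence truncated at $k=p-j$ reproduces the same data purely arithmetically --- your $j=0$ boundary term is exactly the paper's \eqref{eq:lem3}, and your $T_j$ evaluations, the $p$-integrality checks, the absorption of the stray $\la(2)$ as the $i=1$ term (legitimate, since $c_1=1$; you are not invoking \cref{thm:gregory} at $k=1$, where it is not stated), and the final bookkeeping all check out. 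Moreover, comparing coefficients of the $\la(\ell+1)$ is a sufficient way to conclude even though these values are not linearly independent. The one step you left open closes immediately once you note that $c_i$, the coefficient of $x^i$ in $-(1-x)^m\log(1-x)$, has the closed form $c_i=(-1)^{i-1}\binom{m}{i}(H_m-H_{m-i})$ for $1\le i\le m$ (this is again the first case of \eqref{eq:binderiv}, or a short induction): substituting it, your combinatorial claim becomes, after multiplying by $(-1)^{\ell}$, literally the paper's identity \eqref{eq:claim}. The paper proves \eqref{eq:claim} by differentiating the finite polynomial identity \eqref{eq:binpol} with respect to $x$ at $x=m$; being a polynomial identity, this is immune to the $x=1$ singularity that rightly worried you about reading the truncated sums $\sum_{i=\ell}^m c_i\binom{i}{\ell}$ off the generating function, so you may simply import that argument in place of your sketched induction via $F_m'=-mF_{m-1}+(1-x)^{m-1}$ (which should also succeed, but is not needed). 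As for what each approach buys: the paper's detour through $\R$ yields the exact Archimedean identity \eqref{eq:lem1}, valid for every $l\geq m+1$ and of independent interest as the true analogue of Kluyver's computation, whereas your argument is more elementary and self-contained modulo $p$, at the cost of producing only the congruence itself.
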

The particular case $m=1$ of this is 
\[\gaK{1}=\gaW-1 \] 
or 
\[ \gaW=\left(\sum_{n=1}^{p-2}\frac{|G_n|}{n(n+1)} \bmod p\right)_p+2-\la(2), \]
which is in contrast to Kluyver's ($m=1$ of~\eqref{eq:kluyverm})
\[ \gamma=\sum_{n=1}^\infty \frac{|G_n|}{n(n+1)}+1-\log2. \]

We prove the theorem by combining the statements in the following lemma. Recall $m$ is a positive integer.

\begin{lemma}
\begin{enumerate}[$1)$]
\item\label{it:1} For $l \geq m+1$, we have an identity
\begin{equation}\label{eq:lem1}
\begin{split}
&m! \sum_{n=1}^{l-m} \frac{|G_n|}{(l-n-m+1) \cdots (l-n) (l-n+1)}\\
&= \frac{m!}{(l-m+1)(l-m+2) \cdots (l+1)}+ \sum_{k=1}^{m} (-1)^{m+k} \binom{m}{k}(H_m-H_{m-k})|G_{l-k+1}|.
\end{split}
\end{equation}
\item\label{it:2} For any prime $p\geq m+1$, we have a congruence
\begin{equation}\label{eq:lem2}
\frac{(-1)^m}{p\binom{p-1}{m}}-\frac1p\equiv H_m \pmod{p}.
\end{equation}

\item\label{it:3} For any prime $p\geq 3$, we have a congruence
\begin{equation}\label{eq:lem3}
-G_{p-1} - \frac{1}{p} \equiv W_p + 2q_p(2) - 1 \pmod{p}. 
\end{equation}

\item\label{it:4} For any prime $p \geq m+1$, we have 
\begin{equation}\label{eq:lem4}
\begin{split}
&\sum_{k=2}^{m} (-1)^k \binom{m}{k} (H_m-H_{m-k})|G_{p-k}| \\
& \equiv  -\sum_{j=1}^{m-1} (-1)^{m-j} \binom{m}{j}\frac{(j+1)q_p(j+1)}{m-j}-H_m\cdot(m+1)q_p(m+1)+2q_p(2) \pmod{p}. 
\end{split}
\end{equation}
When $m+1=p$, $(m+1)q_p(m+1)$ should be read as $p^{p-1} - 1$.
\end{enumerate}
\end{lemma}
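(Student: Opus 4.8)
The plan is to establish the four items largely independently and then (elsewhere) feed them into the proof of \cref{thm:kluyvergeneral}; the explicit formula of \cref{thm:gregory} is reserved for the last and by far the most delicate item~\ref{it:4}, while item~\ref{it:3} is simply read off from the proof of \cref{thm:main1}.

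For item~\ref{it:1} I would set $a=l-n+1$, so that the denominator is the product $\prod_{i=0}^{m}(a-i)$ of $m+1$ consecutive integers, and use the partial-fraction decomposition
\[ \frac{m!}{\prod_{i=0}^{m}(a-i)} = \sum_{i=0}^{m}\frac{(-1)^{m-i}\binom{m}{i}}{a-i}. \]
Interchanging the two summations, each inner sum takes the form $\sum_{n=1}^{l-m}\frac{|G_n|}{(l+1-i)-n}$, to which I apply the recurrence $\sum_{n=1}^{k-1}\frac{|G_n|}{k-n}=\frac1k$ (with $k=l+1-i$) already exploited in the proof of \cref{thm:main1}. The ``complete'' contributions $\frac{1}{l+1-i}$ recombine, by the very same partial-fraction identity, into the first term $\frac{m!}{(l-m+1)\cdots(l+1)}$ of the right-hand side, while the finitely many leftover tails reassemble, after a second interchange, into a sum of the $|G_{l-k+1}|$. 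Matching coefficients then reduces everything to the binomial--harmonic identity $\sum_{i=0}^{r-1}\frac{(-1)^i\binom{m}{i}}{r-i}=(-1)^{r+1}\binom{m}{r}(H_m-H_{m-r})$, which I would prove by induction on $r$ (or by finite differences).

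Item~\ref{it:2} is a short $p$-adic expansion: from $(-1)^m\binom{p-1}{m}=\prod_{j=1}^{m}(1-p/j)\equiv 1-pH_m \pmod{p^2}$ one gets $\frac{(-1)^m}{p\binom{p-1}{m}}\equiv \frac1p+H_m \pmod p$, the only subtlety being to keep terms to order $p^2$ before dividing by $p$. Item~\ref{it:3} is nothing new: combining \eqref{Gregory-p-1} with the congruence chain in the proof of \cref{thm:main1} already gives $-G_{p-1}-\frac1p\equiv W_p+2q_p(2)-1$, so I would merely record it.

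The hard part will be item~\ref{it:4}. Assuming $m\ge2$ (the case $m=1$ is vacuous, both sides being $0$), I would first note $|G_{p-k}|=(-1)^kG_{p-k}$ for odd $p$ and feed in \cref{thm:gregory} componentwise, namely $|G_{p-k}|\equiv \sum_{j=1}^{k}(-1)^{j-1}\binom{k}{j}(j+1)q_p(j+1)\pmod p$. Substituting, interchanging the $k$- and $j$-sums, and using $\binom{m}{k}\binom{k}{j}=\binom{m}{j}\binom{m-j}{k-j}$, the coefficient of $(j+1)q_p(j+1)$ reduces to $(-1)^{j-1}\binom{m}{j}$ times the inner sum $T_j=\sum_{i=0}^{m-j}(-1)^{i+j}\binom{m-j}{i}(H_m-H_{m-j-i})$. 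Since $\sum_i(-1)^i\binom{m-j}{i}H_m$ vanishes for $j\le m-1$, the harmonic part is evaluated, after reindexing $i\mapsto (m-j)-i$, by the standard identity $\sum_{i=0}^{N}(-1)^i\binom{N}{i}H_i=-\frac1N$ $(N\ge1)$, giving $T_j=\frac{(-1)^m}{m-j}$ for $1\le j\le m-1$ and $T_m=(-1)^mH_m$. The main obstacle is the careful bookkeeping of the two boundary effects. The term $j=m$ (where $N=0$) yields exactly $-H_m\,(m+1)q_p(m+1)$; and because the outer sum starts at $k=2$ rather than $k=1$, the missing $k=1$ contribution $-\frac1m$ must be added back to $T_1$, and this extra $+\frac1m$, multiplied by $\binom{m}{1}=m$ and by $(j+1)q_p(j+1)=2q_p(2)$, produces precisely the stray $+2q_p(2)$ on the right-hand side. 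Collecting the three cases then matches the asserted congruence. Finally, when $m+1=p$ the symbol $(m+1)q_p(m+1)$ is not literally defined, and I would invoke the stated convention $(m+1)q_p(m+1)=p^{p-1}-1$, checking that the $j=m$ computation remains consistent with it.
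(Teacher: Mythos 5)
Your proposal is correct in all four items, and in two of them it takes a genuinely different route from the paper. For item~\ref{it:1}), the paper does not use partial fractions at all: it differentiates the binomial expansion of $\frac{1-(1-x)^s}{x}$ with respect to $s$ and lets $s\to m$, evaluating the limits via the digamma function (formula~\eqref{eq:binderiv}), which yields a closed form for $\frac{(-\log(1-x))(1-x)^m}{x}$; it then multiplies by $\frac{x}{-\log(1-x)}=1-\sum_{n\geq1}|G_n|x^n$ and compares coefficients of $x^l$. Your route --- decomposing $\frac{m!}{\prod_{i=0}^m(a-i)}$ as $\sum_{i=0}^m\frac{(-1)^{m-i}\binom{m}{i}}{a-i}$, applying the Gregory recurrence $\sum_{n=1}^{k-1}\frac{|G_n|}{k-n}=\frac1k$ termwise, and closing with the binomial--harmonic identity $\sum_{i=0}^{r-1}\frac{(-1)^i\binom{m}{i}}{r-i}=(-1)^{r+1}\binom{m}{r}(H_m-H_{m-r})$ --- checks out (the tails reindex to $k=l+1-n$ exactly as you say, and your identity is correct and provable by induction; e.g.\ $m=3$, $r=2$ gives $-\tfrac52$ on both sides). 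For item~\ref{it:4}), your overall structure matches the paper (substitute the congruence from the proof of \cref{thm:gregory}, interchange sums, account for the missing $k=1$ term, whose removal of the $i=0$ contribution $-\frac1m$ from $T_1$ yields $+m\cdot\frac1m\cdot 2q_p(2)$, i.e.\ the stray $2q_p(2)$ --- your bookkeeping is right), but the key inner evaluation, the paper's identity~\eqref{eq:claim}, is proved differently: the paper differentiates the polynomial identity~\eqref{eq:binpol} at $x=m$ and reuses~\eqref{eq:binderiv}, whereas you reduce via $\binom{m}{k}\binom{k}{j}=\binom{m}{j}\binom{m-j}{k-j}$ to the standard fact $\sum_{i=0}^{N}(-1)^i\binom{N}{i}H_i=-\frac1N$, again avoiding calculus; your values $T_j=\frac{(-1)^m}{m-j}$ for $j<m$ and $T_m=(-1)^mH_m$ agree with~\eqref{eq:claim}. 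Items~\ref{it:2}) and \ref{it:3}) coincide with the paper's arguments (the $p$-adic expansion of $(-1)^m\binom{p-1}{m}$, and the combination of~\eqref{Gregory-p-1} with~\eqref{gregwils} from the proof of \cref{thm:main1}), and your handling of the convention $(m+1)q_p(m+1)=p^{p-1}-1$ when $m+1=p$ is consistent, since the quantity actually arising is $\frac{(j+1)^p-(j+1)}{p}$. In sum: the paper's analytic device~\eqref{eq:binderiv} buys economy, powering both items~\ref{it:1}) and \ref{it:4}) with one limit formula, while your approach buys a fully elementary, finite-combinatorial proof with no analytic input.
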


\begin{proof} 

\ref{it:1}) We start with the binomial expansion for a non-zero real number $x$ with $|x|<1$
\[ \frac{1-(1-x)^s}{x} = \sum_{k=1}^\infty (-1)^{k-1}\binom{s}{k} x^{k-1}. \]
Here, $s$ is a variable, say in $\R_{>0}$. We differentiate this with respect to $s$ and take a limit $s\to m$.
Let $\psi(s)=\Gamma'(s)/\Gamma(s)$ be the digamma function.
Then, noting the formula
\[ \frac{d}{ds}\binom{s}{k}=\binom{s}{k}\left(\psi(s+1)-\psi(s+1-k)\right), \]
and 
\begin{equation}\label{eq:binderiv} \lim_{s\to m} \binom{s}{k}\left(\psi(s+1)-\psi(s+1-k)\right) =
	\begin{cases} {\displaystyle{\binom{m}{k}}}(H_m-H_{m-k}) & \text{if }m\geq k,  \\[3mm]
		\dfrac{(-1)^{m+k-1}m!}{(k-m)\cdots (k-1)k} & \text{if }1\leq m<k,
	\end{cases}
\end{equation}
(recall $\psi(m+1)=H_m-\gamma$), we have
\begin{align*}
&\frac{\bigl(-\log(1-x)\bigr)\cdot(1-x)^m}x \\
&=\sum_{k=1}^m (-1)^{k-1}\binom{m}{k}(H_m-H_{m-k}) x^{k-1}+\sum_{k=m+1}^\infty \frac{(-1)^{m}m!}{(k-m)\cdots (k-1)k} x^{k-1}.
\end{align*}
If we multiply both sides of this by 
\[ \frac{x}{-\log(1-x)}=1-\sum_{n=1}^\infty |G_n| x^n\]
and compare the coefficients of $x^l$ for $l\geq m+1$, we obtain~\eqref{eq:lem1}.

\ref{it:2})  The congruence~\eqref{eq:lem2} is an easy consequence of 
\[
(-1)^m\binom{p-1}{m}=\prod_{j=1}^m\left(1-\frac{p}{j}\right)\equiv 1-H_mp\pmod{p^2}.
\]

\ref{it:3}) This is a combination of~\eqref{Gregory-p-1} and~\eqref{gregwils}.

\ref{it:4}) Since the case $m=1$ is trivial, we may assume that $m\geq 2$. To prove~\eqref{eq:lem4}, we use the congruence for each $k$ satisfying $2\leq k\leq p-1$
\[
G_{p-k}\equiv(-1)^k\sum_{j=1}^k(-1)^{j-1}\binom{k}{j}(j+1)q_p(j+1)\pmod{p},
\]
which has been proved in the proof of \cref{thm:gregory} and compute
\begin{align*}
&\sum_{k=2}^{m} (-1)^k \binom{m}{k} (H_m-H_{m-k})|G_{p-k}|\\
&\equiv \sum_{k=2}^{m} (-1)^k \binom{m}{k} (H_m-H_{m-k})\,  \sum_{j=1}^k(-1)^{j-1}\binom{k}{j}(j+1)q_p(j+1) \pmod{p}\\
&= \sum_{j=1}^m(-1)^{j-1}(j+1)q_p(j+1)\sum_{k=j}^{m} (-1)^k \binom{k}{j} \binom{m}{k} (H_m-H_{m-k})+2q_p(2).
\end{align*}
We show the identity
\begin{equation}\label{eq:claim} \sum_{k=j}^{m} (-1)^k \binom{k}{j} \binom{m}{k} (H_m-H_{m-k})=
\begin{cases} (-1)^m{\displaystyle \binom{m}{j}\frac{1}{m-j}}& \text{if }0\leq j<m,\\[3mm]
(-1)^mH_m & \text{if }j=m,
\end{cases} \end{equation}
and then we are done. For this, we use 
\begin{equation}\label{eq:binpol} \sum_{k=j}^m(-1)^k \binom{k}{j}\binom{x}{k}=(-1)^m\binom{x}{j}\binom{x-j-1}{m-j},\end{equation}
which is valid for $0\leq j\leq m$.
This follows from $\binom{k}{j}\binom{x}{k}=\binom{x}{j}\binom{x-j}{k-j}$ and
\[ \sum_{k=j}^m(-1)^k\binom{x-j}{k-j}=(-1)^m\binom{x-j-1}{m-j}, \]
which is shown by telescoping.
We differentiate~\eqref{eq:binpol} with respect to $x$ and set $x=m$, and use~\eqref{eq:binderiv} to obtain~\eqref{eq:claim}.  
\end{proof}

\begin{proof}[Proof of $\cref{thm:kluyvergeneral}$] 
Set $l=p-1$ in~\eqref{eq:lem1} and multiply both sides by $(-1)^{m+1}$.
Then we have 	
	\begin{align*}
		&m! \sum_{n=1}^{p-m-1} \frac{|G_n|}{(n+m-p) \cdots  (n+1-p)(n-p)}\\
		&= \frac{(-1)^{m+1}m!}{(p-m)(p-m+1) \cdots p} -\sum_{k=1}^{m} (-1)^{k} \binom{m}{k} (H_m-H_{m-k})|G_{p-k}|\\
		&=\frac{(-1)^{m+1}}{p\binom{p-1}m}-G_{p-1}-\sum_{k=2}^{m} (-1)^{k} \binom{m}{k} (H_m-H_{m-k})|G_{p-k}|\\
		&=-\left(\frac{(-1)^m}{p\binom{p-1}m}-\frac1p\right)-G_{p-1}-\frac1p-\sum_{k=2}^{m} (-1)^{k} \binom{m}{k} (H_m-H_{m-k})|G_{p-k}|.
	\end{align*}
Reducing this modulo $p$ and using~\eqref{eq:lem2}, \eqref{eq:lem3} and~\eqref{eq:lem4}, we obtain 
\begin{align*}
&m!\sum_{n=1}^{p-m-1}\frac{|G_n|}{n(n+1)\cdots (n+m)}+H_m-(m+1)q_p(m+1)\\
&\equiv W_p - 1+(H_m-1)\cdot(m+1)q_p(m+1)+\sum_{j=1}^{m-1} (-1)^{m-j} \binom{m}{j}\frac{(j+1)q_p(j+1)}{m-j} \pmod{p},
\end{align*}
which is the desired congruence.	
\end{proof}
\begin{remark}
Our main results (\cref{thm:main1} and \cref{thm:kluyvergeneral}) say that our various analogues of $\gamma$ in $\A$ differ only by linear combinations of $\log_{\A}(j)$ ($j\in \mathbb{N}$) and 1.
This may suggest something very interesting in light of a recent work of Rosen \cite{Rosen} on ``periods'' in $\A$ (or more generally in the ring $\prod_p\mathbb{Q}_p/\bigoplus_p\mathbb{Q}_p$).
According to him, $\log_{\A}(j)$'s are one of the most simple periods in $\A$ (like $\log x = \int_1^x\frac{dt}{t}$ in $\mathbb{R}$).
On the other hand, Euler's constant $\gamma$ (in $\mathbb{R}$) is believed to be \emph{not} a period in the sense of Kontsevich--Zagier \cite{KontsevichZagier2001}.
If we believe the parallelism holds in $\A$, an analogue $\gamma_{\A}$ of $\gamma$ in $\A$ might arguably not a period.
Our results then suggest that in whatever way we define an analogue $\gamma_{\A}$, it might possibly give a unique class of elements in $\A$ modulo a linear combination of $\log_{\A}(j)$'s and 1.
\end{remark}
We anyway believe that there are more and more interesting arithmetic in the ring $\A$.

\subsection*{Acknowledgements}
This work was supported by JSPS KAKENHI Grant Numbers JP21H04430, JP21K18141 (Kaneko), JP20K14292 (Matsusaka), and JP21K13762 (Seki).

\leavevmode\\
Masanobu Kaneko\\
Faculty of Mathematics, Kyushu University\\
Motooka 744, Nishi-ku, Fukuoka, 819-0395, JAPAN\\
e-mail: mkaneko@math.kyushu-u.ac.jp\\
\\
Toshiki Matsusaka\\
Faculty of Mathematics, Kyushu University\\
Motooka 744, Nishi-ku, Fukuoka, 819-0395, JAPAN\\
e-mail: matsusaka@math.kyushu-u.ac.jp\\
\\
Shin-ichiro Seki\\
Department of Mathematical Sciences, Aoyama Gakuin University\\
Fuchinobe 5-10-1, Chuo-ku, Sagamihara, Kanagawa, 252-5258, JAPAN\\
e-mail: seki@math.aoyama.ac.jp

\end{document}